\documentclass[12pt]{amsart}
\usepackage[centertags]{amsmath}
\usepackage{latexsym}
\usepackage{amssymb}

%\newcommand{\nota}[1]{}

%__________________________________________________________________________
%Wider and higher text, and narrower margins
\addtolength{\oddsidemargin}{-1.2cm}
\addtolength{\evensidemargin}{-1.2cm}
\addtolength{\textwidth}{1.7cm}
%\addtolength{\textheight}{1.45cm}
%__________________________________________________________________________

\newcommand{\F}{\mathbb F}

\newcommand{\barFq}{\overline{{\mathbb F}_q}}
\newcommand{\Proj}{\mathbb P}

\newcommand{\hide}[1]{}
%__________________________________________________________________________
%Theorems and such
\newtheorem{dummy}{Dummy}

\numberwithin{equation}{section}

\newtheorem{theorem}[dummy]{Theorem}

\newtheorem{cor}[dummy]{Corollary}

\theoremstyle{definition}

\theoremstyle{remark}

%__________________________________________________________________________

\begin{document}%___________________________________________________________

\bibliographystyle{amsalpha}
\author{Sandro Mattarei}
\email{mattarei@science.unitn.it}
\address{Dipartimento di Matematica\\
  Universit\`a degli Studi di Trento\\
  via Sommarive 14\\
  I-38123 Povo (Trento)\\
  Italy}
\title{A property of the inverse of a subspace of a finite field}
%\date{\today}
\begin{abstract}
We prove a geometric property of the set
$A^{-1}$ of inverses of the nonzero elements of an $\F_q$-subspace $A$ of a finite field
involving the size of its intersection with two-dimensional $\F_q$-subspaces.
We give some applications,
including a new upper bound on $|A^{-1}\cap B|$
when $A$ and $B$ are $\F_q$-subspaces of different dimension of a finite field,
satisfying a suitable natural assumption.
\end{abstract}
\subjclass[2000]{Primary 11T30; secondary 51E20}
\keywords{finite field, subspace, inverse, arc, cap}
\maketitle

\section{Introduction}

The inversion map $x\mapsto x^{-1}$ in a finite field has been the object of various studies in recent years.
In particular, its interaction with the operation of addition is of interest for cryptographic applications.
The best-known example is that inversion in the finite field of $2^8$ elements
({\em patched} by sending zero to itself)
is the nonlinear transformation employed in the S-boxes
in the Advanced Encryption Standard (Rijndael, see~\cite{AES}).
A study of an AES-like cryptosystem in~\cite{CDVS:AES} required, in the special case of finite fields, the determination
of the additive subgroups of a field which are closed with respect to inverting nonzero elements,
which was provided by the author in~\cite{Mat:inverse-closed}.
(The more general question in division rings was independently answered in~\cite{GGSZ}.)
A small variation of this fact was required in~\cite{KLS} for a different cryptographic application,
and a more substantial generalization was studied in~\cite{Csajbok:inverse-closed},
to which we will return later in this Introduction.
All those studies involve a set
$A^{-1}=\{x^{-1}:0\neq x\in A\}$, where $A$ is an $\F_q$-subspace of a finite field.

In this note we prove a geometric property of $A^{-1}$.
Because the specific ambient finite field plays no role in our result, it will be equivalent, but notationally simpler, to
rather work inside an algebraic closure $\barFq$ of $\F_q$.
Our result then reads as follows.

\begin{theorem}\label{thm:geometric}
Let $A$ be an $\F_q$-subspace of $\barFq$, of dimension $d$.
Let $U$ be a two-dimensional $\F_q$-subspace of $\barFq$,
and suppose
$|A^{-1}\cap U|>d(q-1)$.
Then $U^{-1}\subseteq A$, and
the $\F_q$-span of $U^{-1}$ is a one-dimensional $\F_{q^e}$-subspace of $\barFq$,
for some $e$.
\end{theorem}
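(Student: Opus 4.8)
The plan is to reduce to the case $1\in U$ and then to recognise the $q+1$ lines of $U$, after inversion, as an \emph{arc} in a projective space over $\F_q$; the theorem will drop out of two numerical inequalities. First, for any $c\in\barFq^{*}$, replacing $(A,U)$ by $(cA,c^{-1}U)$ changes neither the hypothesis nor the conclusion: $(cA)^{-1}\cap(c^{-1}U)=c^{-1}(A^{-1}\cap U)$ has the same cardinality, $(c^{-1}U)^{-1}=c\,U^{-1}$, the $\F_q$-span of $c\,U^{-1}$ is $c$ times that of $U^{-1}$, multiplication by $c$ carries one-dimensional $\F_{q^{e}}$-subspaces to one-dimensional $\F_{q^{e}}$-subspaces, and $c\,U^{-1}\subseteq cA$ is equivalent to $U^{-1}\subseteq A$. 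Taking $c\in U\setminus\{0\}$ I may thus assume $1\in U$, so $U=\F_q+\F_q t$ with $t\in\barFq\setminus\F_q$; set $n=[\F_q(t):\F_q]\ge 2$ and let $p\in\F_q[x]$ be the monic minimal polynomial of $t$, so $\deg p=n$ and $\F_q(t)=\F_{q^{n}}$.

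Next I would translate the hypothesis into a count. The $q^{2}-1$ nonzero elements of $U$ fall into $q+1$ punctured one-dimensional $\F_q$-subspaces, and since $A$ is an $\F_q$-subspace, a line $\F_q v$ of $U$ contributes either all $q-1$ of its nonzero elements to $A^{-1}$ (precisely when $v^{-1}\in A$) or none. Hence $|A^{-1}\cap U|=(q-1)N$, where $N$ is the number of lines $\F_q v\subseteq U$ with $v^{-1}\in A$, and the hypothesis becomes $N\ge d+1$; in particular $d\le q$.

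The crux is the following \emph{arc property}: if $v_{1},\dots,v_{r}\in U\setminus\{0\}$ are pairwise non-proportional over $\F_q$ and $r\le n$, then $v_{1}^{-1},\dots,v_{r}^{-1}$ are $\F_q$-linearly independent. Writing $v_{i}=a_{i}+b_{i}t$ with $(a_{i},b_{i})\in\F_q^{2}$, a relation $\sum_{i}\lambda_{i}v_{i}^{-1}=0$ with $\lambda_{i}\in\F_q$ becomes, after multiplying by $\prod_{j}v_{j}$, the identity $P(t)=0$ for $P(x)=\sum_{i}\lambda_{i}\prod_{j\ne i}(a_{j}+b_{j}x)\in\F_q[x]$, which has degree at most $r-1<n$; since $p\mid P$ this forces $P=0$. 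For an index $i$ with $b_{i}\ne 0$, evaluating $P$ at $x=-a_{i}/b_{i}$ annihilates every term but the $i$-th, leaving $\lambda_{i}\prod_{j\ne i}(a_{j}b_{i}-b_{j}a_{i})/b_{i}$, a nonzero multiple of $\lambda_{i}$ because $a_{j}b_{i}-b_{j}a_{i}\ne 0$ by non-proportionality; for the (at most one) index $i$ with $b_{i}=0$, the coefficient of $x^{r-1}$ in $P$ equals $\lambda_{i}\prod_{j\ne i}b_{j}$, again a nonzero multiple of $\lambda_{i}$. So all $\lambda_{i}=0$. Equivalently, inverting the $q+1$ lines of $U$ yields $q+1$ distinct lines $\F_q v^{-1}\subseteq\F_{q^{n}}$, any $n$ of which are $\F_q$-independent.

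To finish, recall that $N\ge d+1$ of these lines $\F_q v^{-1}$ lie inside $A$. If $d+1\le n$, choosing $d+1$ of them gives, by the arc property, $d+1$ $\F_q$-independent vectors in $A$ — impossible, as $\dim A=d$. Hence $n\le d$, and together with $d\le q$ this gives $n\le d\le q$, so $N\ge d+1\ge n+1$ and at least $n$ of the lines $\F_q v^{-1}$ lie in $A$; any $n$ of them are $\F_q$-independent, hence span an $n$-dimensional $\F_q$-subspace of $\F_{q^{n}}$, which must be $\F_{q^{n}}$ itself. That subspace lies in $A$ and in the $\F_q$-span of $U^{-1}$, while the latter is contained in $\F_q(t)=\F_{q^{n}}$; so the $\F_q$-span of $U^{-1}$ equals $\F_{q^{n}}\subseteq A$, whence $U^{-1}\subseteq A$ and the $\F_q$-span of $U^{-1}$ is the one-dimensional $\F_{q^{n}}$-subspace $\F_{q^{n}}$, proving the theorem with $e=n$ (equivalently $e=[\F_q(w_{1}^{-1}w_{2}):\F_q]$ for an $\F_q$-basis $w_{1},w_{2}$ of the original $U$). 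The only step requiring care is the arc property — in particular the separate treatment, via leading coefficients, of the line meeting $\F_q$, and the correct use of the bound $\deg P\le r-1<n$; everything afterwards is elementary linear algebra driven by the inequalities $n\le d$ and $d\le q$.
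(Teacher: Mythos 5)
Your proof is correct, but it is organized quite differently from the paper's. The paper takes the $d+1$ elements of $A$ guaranteed by the hypothesis, considers a \emph{shortest} $\F_q$-linear dependence among them, say of length $e+1$, clears denominators to conclude $\xi/\eta\in\F_{q^t}$ with $t\le e$, and then uses minimality of the relation to force $t=e$ and to exhibit a basis of $\F_{q^e}\xi^{-1}$ inside $A\cap U^{-1}$; the value of $e$ thus emerges as the length of the shortest relation. You instead fix $n=[\F_q(\xi/\eta):\F_q]$ from the outset (after normalizing $1\in U$) and prove an independence lemma --- any $r\le n$ inverses of pairwise non-proportional elements of $U$ are $\F_q$-independent --- by the same denominator-clearing computation, finished off with a Lagrange-interpolation-style evaluation at the roots $-a_i/b_i$ plus a leading-coefficient argument for the one line meeting $\F_q$. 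The theorem then follows by pure counting: $N\ge d+1$ independent-or-bust vectors force $n\le d$, and then $n$ of them span $\F_{q^n}\subseteq A$. Your lemma is precisely the statement that the image of $U^{-1}$ in $\Proj\F_{q^n}$ is an arc, i.e.\ it is essentially Theorem~\ref{thm:Faina} (the result of Faina et al.), which the paper instead \emph{deduces} from Theorem~\ref{thm:geometric}; so you have reversed the paper's logical order, proving the arc property first and the geometric theorem as a corollary. The paper's route is a little shorter and avoids the case split in your interpolation argument; yours has the merit of isolating a clean, reusable independence statement and identifying $e$ directly as a field degree rather than as the length of a minimal relation. One tiny remark: the inequality $d\le q$ that you extract from $N\le q+1$ is never actually needed --- $N\ge d+1\ge n+1$ already follows from $n\le d$.
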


Because set $A^{-1}$ is closed with respect to scalar multiplication by elements of $\F_q^\ast$,
it is natural to interpret its properties in a projective space
$\Proj V$, where $V$ is an $\F_q$-linear subspace of $\barFq$ containing $A^{-1}$.
(To resolve possible ambiguities,
in this paper the operator $\Proj$ will only be applied to vector spaces over the field $\F_q$.)
From this geometric viewpoint, the two-dimensional space $U$ of Theorem~\ref{thm:geometric}
represents a line in a projective space, and the condition on the integer $|A^{-1}\cap U|/(q-1)$, which clearly equals
$|A\cap U^{-1}|/(q-1)$, can be read in terms of caps or arcs according as whether we focus our attention on $A^{-1}$ or $U^{-1}$.
We present some applications of our result which exploit, in turn, one or the other interpretation.

Our first application of Theorem~\ref{thm:geometric} is a simpler proof of the main result of~\cite{Faina:cyclic_model}, which is their Theorem~3.3,
and generalizes
the following result of M.~Hall~\cite{Hall:difference_sets}:
in the cyclic model of the projective plane $PG(2,q)$, the inverse of a line is a conic.
We will explain this terminology, and state and prove the main result of~\cite{Faina:cyclic_model} in Section~\ref{sec:arcs},
after proving Theorem~\ref{thm:geometric}.

Our next application, which we formalize in Theorem~\ref{thm:cap}, uses Theorem~\ref{thm:geometric} to deduce an upper bound on
$|A^{-1}\cap B|/(q-1)$ from any available general bounds on (higher) caps,
where $A$ and $B$ are finite-dimensional subspaces of $\barFq$.
Theorem~\ref{thm:geometric} is the special case of this where $B$ has dimension two.
Of course a non-trivial bound can only be obtained provided one steers away from some special configurations,
such as the extreme case $A^{-1}\subseteq B$.
By employing a general bound on caps our result yields
$|A^{-1}\cap B|\le (d-1)|B|/q+q-d$, where $|A|=q^d$, under the assumption that $A$ does not contain any
any nonzero $\F_{q^e}$-subspace of $\barFq$ with $e>1$.

The special case of this bounding problem where $|A|=|B|$ was
studied in~\cite{Csajbok:inverse-closed}, and then in~\cite{Mat:inversion}.
In particular, in the former Csajb\'{o}k proved the general bound
$|A^{-1}\cap B|\le 2|B|/q-2$, for any subspaces $A$ and $B$ of $\barFq$, of the same (finite) dimension, such that $A^{-1}\not\subseteq B$.
This surpasses the bound given by our Theorem~\ref{thm:cap} when $|A|=|B|=q^d>q^3$.
However, the method of~\cite{Csajbok:inverse-closed},
which expands on a polynomial argument of the author in~\cite{Mat:inverse-closed},
seems unsuited to deal with the case $|A|>|B|$,
where our Theorem~\ref{thm:cap} provides the only known nontrivial bound.
(Note that our bound is larger than $|B|-1$ when $|A|<|B|$, and hence trivial.)

Furthermore, Theorem~\ref{thm:cap} produces a contribution to the case $|A|=|B|=q^3$,
where a slightly better available bound on caps yields
$|A^{-1}\cap B|<2|B|/q-2$
apart from a special situation.
Our final application of Theorem~\ref{thm:cap} is then the determination,
in Theorem~\ref{thm:Segre},
of an exceptional geometric configuration
which occurs when equality is attained in Csajb\'{o}k's bound for $|A|=|B|=q^3$:
the image of $A^{-1}\cap B$ in $\Proj B$ is then the union of a conic and an external line.
As we explain in Section~\ref{sec:caps},
that result is included in a more general investigation in~\cite{Mat:inversion},
but the short proof given here bypasses longer and more demanding arguments employed there.

The author is grateful to Bence Csajb\'{o}k for interesting discussions on this topic.

\section{A proof of Theorem~\ref{thm:geometric}, and an application involving arcs}\label{sec:arcs}

\begin{proof}[Proof of Theorem~\ref{thm:geometric}]
Our hypothesis means that there exist $\xi,\eta\in\barFq$
with $U=\F_q\xi+\F_q\eta$, and $d$ distinct $\alpha_1,\ldots,\alpha_d\in\F_q$, such that
$\eta,\xi+\alpha_1\eta,\ldots,\xi+\alpha_d\eta\in A^{-1}$.
The inverses of those elements must then be linearly dependent over
$\F_q$, because $A$ has dimension $d$.
Consider a shortest linear dependence relation among them.
Possibly after permuting those elements, which may include redefining $\eta$,
the relation takes the form
\[
\frac{1}{\eta}+\sum_{i=1}^e\frac{\beta_i}{\xi+\alpha_i\eta}=0,
\]
for some
$\beta_1,\ldots,\beta_e\in\F_q^\ast$, with $2\le e\le d$.
Clearing the denominators we find that the pair $(\xi,\eta)$
is a zero of a homogeneous polynomial of degree $e$ with coefficients in $\F_q$ and,
consequently, $\xi/\eta\in\F_{q^t}$ for some $t\le e$.
Because
$\xi/\eta,\xi/(\xi+\alpha_1\eta),\ldots,\xi/(\xi+\alpha_{e-1}\eta)$
belong to $\F_{q^t}$ and are linearly independent over $\F_q$,
we have $t=e$, and they form a basis of
$\F_{q^e}$ over $\F_q$.
Hence the elements
$1/\eta,1/(\xi+\alpha_1\eta),\ldots,1/(\xi+\alpha_{e-1}\eta)$
of $A\cap U^{-1}$ span the one-dimensional $\F_{q^e}$-subspace
$\F_{q^e}\xi^{-1}$ of $\barFq$.
Because $\eta$ and $\xi+\alpha_1\eta$ belong to
$(\F_{q^e}\xi^{-1})^{-1}\cup\{0\}=\F_{q^e}\xi$
and their $\F_q$-span equals $U$,
we have $U\subseteq\F_{q^e}\xi$,
and hence
$U^{-1}\subseteq\F_{q^e}\xi^{-1}\subseteq A$.
\end{proof}

In order to formulate our first application  of Theorem~\ref{thm:geometric} we need to introduce some terminology.
The {\em cyclic model of $PG(n,q)$} is the $n$-dimensional projective space $\Proj\F_{q^{n+1}}$, with the added cyclic group structure induced by
the multiplicative group $\F_{q^{n+1}}^{\ast}$.
The {\em inverse} (called the {\em additive inverse} in~\cite{Faina:cyclic_model}) of a subset of the cyclic model of
$PG(n,q)$ must be intended with respect to the group operation.
An arc in $PG(n,q)$ is a set of $k\ge n+1$ points of which no $n+1$ lie on the same hyperplane.
We use our Theorem~\ref{thm:geometric} to prove the main result of~\cite{Faina:cyclic_model},
which reads as follows.

\begin{theorem}[Theorem~3.3 of~\cite{Faina:cyclic_model}]\label{thm:Faina}
If $q+1>n$, then in the cyclic model of $PG(n,q)$ the inverse of any line is an arc in some subspace $PG(m,q)$, where $m+1$ divides $n+1$.
\end{theorem}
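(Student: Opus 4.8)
The plan is to derive both halves of the statement from two applications of Theorem~\ref{thm:geometric}, once the right dictionary is in place. A line of the cyclic model of $PG(n,q)$ is the image $\Proj U$ of a two-dimensional $\F_q$-subspace $U$ of $\F_{q^{n+1}}$, and since the cyclic structure is that of $\F_{q^{n+1}}^\ast$, the inverse of this line (in the group sense) is precisely the image of $U^{-1}$ in $\Proj\F_{q^{n+1}}$. As $\F_{q^{n+1}}$ is a field we have $U^{-1}\subseteq\F_{q^{n+1}}$; let $V$ be the $\F_q$-span of $U^{-1}$ and put $m+1=\dim_{\F_q}V$, so that $V\subseteq\F_{q^{n+1}}$, $m\le n$, and $\Proj V\cong PG(m,q)$ is a projective subspace of $PG(n,q)$ containing the inverse of the line. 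Inversion induces a bijection $\Proj U\to\Proj V$, so this inverse consists of exactly $q+1$ points of $\Proj V$; and since $m\le n\le q$ we have $q+1\ge m+1$, so there are at least enough points for the notion of an arc in $PG(m,q)$ to apply.

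Next I would establish the arc property by contradiction. If some hyperplane $\Proj W$ of $\Proj V$, with $\dim_{\F_q}W=m$, contained $m+1$ of these $q+1$ points, then passing to $\F_q^\ast$-multiples would give $|W\cap U^{-1}|\ge(m+1)(q-1)$, that is $|W^{-1}\cap U|=|W\cap U^{-1}|>m(q-1)$. Theorem~\ref{thm:geometric} applied with $A=W$ (of dimension $m$) would then force $U^{-1}\subseteq W$, contradicting the fact that $U^{-1}$ spans $V$ while $W\subsetneq V$. Hence no $m+1$ of the $q+1$ points lie on a hyperplane of $\Proj V$, which is exactly the assertion that the inverse of the line is an arc in $\Proj V\cong PG(m,q)$.

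It then remains to prove $m+1\mid n+1$, and here I would apply Theorem~\ref{thm:geometric} a second time with $A=V$ and $d=m+1$. Since $U^{-1}\subseteq V$ we have $V^{-1}\cap U=U\setminus\{0\}$, so $|V^{-1}\cap U|=q^2-1=(q+1)(q-1)$, which exceeds $d(q-1)=(m+1)(q-1)$ exactly when $m<q$. In that case Theorem~\ref{thm:geometric} tells us that $V$, the $\F_q$-span of $U^{-1}$, is a one-dimensional $\F_{q^{m+1}}$-subspace of $\barFq$; dividing through by any nonzero element of $V$ exhibits $\F_{q^{m+1}}$ as a subfield of $\F_{q^{n+1}}$, whence $m+1\mid n+1$. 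The only remaining possibility is $m=q$, the single value where the size hypothesis of Theorem~\ref{thm:geometric} just fails for $A=V$; but combined with $m\le n$ and the standing assumption $q+1>n$ it forces $n=q=m$, so that $m+1=n+1$ trivially divides $n+1$ (and in fact $V=\F_{q^{n+1}}$).

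I expect no serious obstacle once Theorem~\ref{thm:geometric} is available: the work is in fixing the dictionary between the group-theoretic inverse of a line and the set $U^{-1}$, and in recognising that being an arc translates into the non-existence of a hyperplane of $\Proj V$ meeting $U^{-1}$ in $m+1$ points, which is precisely the hypothesis needed to invoke Theorem~\ref{thm:geometric} for the hyperplane $W$. The one delicate point is the borderline case $m=q$, and the hypothesis $q+1>n$ is present exactly to dispose of it; without that hypothesis the $\F_q$-span of $U^{-1}$ need not be a field-subspace, and the divisibility conclusion can genuinely fail.
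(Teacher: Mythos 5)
Your proposal is correct and follows essentially the same route as the paper: both halves are obtained by applying Theorem~\ref{thm:geometric} first to a hyperplane $W$ of $V$ (to get the arc property) and then to $V$ itself (to get that $V$ is a one-dimensional $\F_{q^{m+1}}$-subspace, hence $m+1\mid n+1$). The only cosmetic difference is that the paper disposes of the borderline case by assuming $m<n$ outright (the case $m=n$ being trivial for divisibility), whereas you split on $m<q$ versus $m=q$; the two reductions are equivalent under the hypothesis $q+1>n$.
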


\begin{proof}
A line in the cyclic model of $PG(n,q)$ is the image in $\Proj\F_{q^{n+1}}$ of a two-dimensional $\F_q$-subspace $U$ of $\F_{q^{n+1}}$.
Fix such a line and let $V$ be the $\F_q$-span of $U^{-1}$.
If $V$ has dimension $m+1$ then $\Proj V$ is a projective geometry $PG(m,q)$.
Any hyperplane in $\Proj V$, which is the image of an $m$-dimensional $\F_q$-subspace $A$ of $V$,
meets the image of $U^{-1}$ in $\Proj V$ in at most $m$ points,
otherwise Theorem~\ref{thm:geometric} would be contradicted because $U^{-1}\not\subseteq A$.
Hence the image of $U^{-1}$, which is the inverse of our line, is an arc in $\Proj V$.

It remains to show that $m+1$ divides $n+1$, and to this purpose we may assume $m<n$.
Because $|V\cap U^{-1}|/(q-1)=|U^{-1}|/(q-1)=q+1>m+1$, an application of Theorem~\ref{thm:geometric}
with $V$ in place of $A$ shows that $V$
is a one-dimensional $\F_{q^e}$-subspace of $\barFq$,
whence $e=m+1$.
Because $V\subseteq\F_{q^{n+1}}$ it follows that $\F_{q^{m+1}}\subseteq\F_{q^{n+1}}$, and hence $m+1$ divides $n+1$.
\end{proof}

If $n+1$ is a prime in Theorem~\ref{thm:Faina}, one concludes with~\cite{Faina:cyclic_model} that the inverse of a line is an arc in $PG(n,q)$,
and for $n=2$ one recovers the result of M.~Hall mentioned in the introduction.

\section{Applications involving caps}\label{sec:caps}

Our next application of Theorem~\ref{thm:geometric} concerns caps rather than arcs.
A $(k,r)$-cap in the projective geometry $PG(n,q)$
is a set of $k$ points, of which no $r+1$ are collinear.
(A variant of this definition requires that the set contains at least one set of $r$
collinear points, but this difference is immaterial here.)
The largest size $k$ of a $(k,r)$-cap
in $PG(n,q)$ is denoted by $m_r(n,q)$.

\begin{theorem}\label{thm:cap}
Let $A$ and $B$ be $\F_q$-subspaces of $\barFq$ of size $q^d$ and $q^{d'}$, respectively.
Suppose that $A$ does not
contain any nonzero $\F_{q^e}$-subspace of $\barFq$ with $e>1$.
Then
$|A^{-1}\cap B|/(q-1)\le m_d(d'-1,q)$.
\end{theorem}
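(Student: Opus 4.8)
The plan is to reduce the bound on $|A^{-1}\cap B|/(q-1)$ to the definition of $m_d(d'-1,q)$ by exhibiting the image of $A^{-1}\cap B$ in a suitable projective space as a $(k,d)$-cap. First I would set $V$ to be the $\F_q$-span of $B$, so $V=B$ has dimension $d'$ and $\Proj B$ is a copy of $PG(d'-1,q)$. Since $A^{-1}$ is closed under multiplication by $\F_q^\ast$, the set $A^{-1}\cap B$ is a union of punctured $\F_q$-lines through the origin, so it descends to a well-defined subset $\mathcal K$ of $\Proj B$ of cardinality $|A^{-1}\cap B|/(q-1)=k$. The entire content of the theorem is then that $\mathcal K$ is a $(k,d)$-cap, i.e.\ that no $d+1$ of its points are collinear; granting this, $k\le m_d(d'-1,q)$ by definition of the latter quantity.

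Next I would verify the cap condition using Theorem~\ref{thm:geometric}. Suppose, for contradiction, that $d+1$ points of $\mathcal K$ lie on a common line $\ell$ of $\Proj B$. A line of $\Proj B$ is the image of a two-dimensional $\F_q$-subspace $U\subseteq B\subseteq\barFq$, and "$d+1$ points of $\mathcal K$ on $\ell$" means $|A^{-1}\cap U|/(q-1)\ge d+1$, that is $|A^{-1}\cap U|\ge (d+1)(q-1)>d(q-1)$. Theorem~\ref{thm:geometric} then applies to this $U$ and yields that the $\F_q$-span of $U^{-1}$ is a one-dimensional $\F_{q^e}$-subspace of $\barFq$ for some $e$, and moreover $U^{-1}\subseteq A$. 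Since $U$ is genuinely two-dimensional over $\F_q$ (it defines a line, not a point), the conclusion forces $e\ge 2$: indeed $\dim_{\F_q}(U^{-1})\ge\dim_{\F_q}U=2$ because inversion on the punctured $\F_q$-lines of $U$ is injective on $\Proj U$, so the span of $U^{-1}$ has $\F_q$-dimension at least $2$, hence cannot be one-dimensional over $\F_q$ and we need $e\ge 2$. But then $A$ contains the nonzero $\F_{q^e}$-subspace $U^{-1}$ (or rather the one-dimensional $\F_{q^e}$-space spanned by it, which is still contained in $A$ by the theorem's conclusion $U^{-1}\subseteq\F_{q^e}\xi^{-1}\subseteq A$), with $e>1$, contradicting our hypothesis on $A$. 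Hence no such line $\ell$ exists.

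The only genuinely delicate point is the dimension bookkeeping: one must be careful that $U$ really is two-dimensional (a line of $\Proj B$, not a degenerate case), that the span of $U^{-1}$ is therefore not $\F_q$-one-dimensional and so $e\ge 2$ in Theorem~\ref{thm:geometric}, and that the object landed inside $A$ by that theorem is an honest nonzero $\F_{q^e}$-subspace — all of which is exactly what the proof of Theorem~\ref{thm:geometric} delivers ($U^{-1}\subseteq\F_{q^e}\xi^{-1}\subseteq A$, with $\F_{q^e}\xi^{-1}$ a one-dimensional $\F_{q^e}$-subspace). I would also remark that the special case $d'=2$ of Theorem~\ref{thm:cap} recovers Theorem~\ref{thm:geometric} itself (up to the extra structural conclusions there), since $m_d(1,q)=q+1$ and the hypothesis on $A$ is exactly what rules out the exceptional configuration; this serves as a sanity check that the inequality is the right one and that the edge cases have been handled correctly.
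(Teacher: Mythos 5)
Your proof is correct and follows essentially the same route as the paper: project $A^{-1}\cap B$ to $\Proj B\cong PG(d'-1,q)$, show its image is a $(k,d)$-cap by applying Theorem~\ref{thm:geometric} to the preimage $U$ of any line meeting it in more than $d$ points, and contradict the hypothesis that $A$ contains no nonzero $\F_{q^e}$-subspace with $e>1$. (Only your closing sanity check is slightly off: for $d\le q+1$ one has $m_d(1,q)=d$ rather than $q+1$, since all points of $PG(1,q)$ are collinear, but this aside does not affect the argument.)
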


\begin{proof}
Suppose for a contradiction that the desired conclusion is violated, that is,
$|A^{-1}\cap B|/(q-1)=k>m_d(d'-1,q)$.
Then the image of $A^{-1}\cap B$ in $\Proj B\cong PG(d'-1,q)$ is not a $(k,d)$-cap, and hence
it meets a line in $\Proj B$ in more than $d$ points.
According to Theorem~\ref{thm:geometric}, the preimage $U$ in $B$ of that line is contained in some
one-dimensional $\F_{q^e}$-subspace $\F_{q^e}\xi$ of $\barFq$ with $e>1$, which in turn is contained in $A^{-1}\cup\{0\}$.
But then $A$ contains $\F_{q^e}\xi^{-1}$, contradicting our hypotheses.
\end{proof}

When $d>3$ the only general bound on cap sizes
available for use in Theorem~\ref{thm:geometric} is
$m_r(t,q)\le 1+(r-1)\cdot(q^t-1)/(q-1)$,
which is easily proved by considering all lines which pass through a fixed point of the cap.
The conclusion of Theorem~\ref{thm:geometric} then reads
$|A^{-1}\cap B|\le (d-1)q^{d'-1}+q-d$.
As we noted in the Introduction, this is nontrivial when $d\ge d'$, and new when $d>d'$,
whereas neither the polynomial method used in~\cite{Csajbok:inverse-closed}, nor its more powerful variant employed in~\cite{Mat:inversion},
seem capable to produce any essentially nontrivial bound in the latter case.
When $d'=3$ our bound can perhaps be more conveniently written as
$|A^{-1}\cap B|/(q-1)\le(d-1)q+d$.

When $d=d'>2$ the bound we have just given is worse than the bound
$|A^{-1}\cap B|\le 2q^{d-1}-2$
proved by Csajb\'{o}k in~\cite{Csajbok:inverse-closed}.
(They match when $d=d'=2$, an easy case briefly discussed in~\cite[Section~2]{Mat:inversion}.)
%whenever $A$ and $B$ are $\F_q$-subspaces of $\barFq$ of dimension $d$, with $A^{-1}\not\subseteq B$.
For $d>3$ the author strengthened Csajb\'{o}k's bound to
$|A^{-1}\cap B|\le q^{d-1}+O_d(q^{d-3/2})$
in~\cite{Mat:inversion}.
However, Csajb\'{o}k's bound is sharp when $d=3$, and the following corollary of Theorem~\ref{thm:cap}
provides crucial information on the case where equality is attained.

\begin{cor}\label{cor:q^3}
Suppose $q>3$.
Let $A$ and $B$ be $\F_q$-subspaces of $\barFq$ of size $q^3$, with $A^{-1}\not\subseteq B$.
If
$|A^{-1}\cap B|/(q-1)>2q+1$, then $(A^{-1}\cap B)\cup\{0\}$
contains a one-dimensional $\F_{q^2}$-subspace of $\barFq$.
\end{cor}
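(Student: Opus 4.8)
The plan is to run the argument of Theorem~\ref{thm:cap} in the case $d=d'=3$, feeding in the sharper cap estimate $m_3(2,q)\le 2q+1$, which is available precisely because $q>3$. First I would set $k=|A^{-1}\cap B|/(q-1)$ and observe that the hypothesis gives $k>2q+1\ge m_3(2,q)$; hence the image $S$ of $A^{-1}\cap B$ in $\Proj B\cong PG(2,q)$ is a set of $k$ points that is not a $(k,3)$-cap, so some line of $\Proj B$ meets $S$ in at least four points. Let $U\subseteq B$ be the two-dimensional $\F_q$-subspace with $\Proj U$ equal to that line; then $|A^{-1}\cap U|>3(q-1)$, so Theorem~\ref{thm:geometric} applies with $d=3$. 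It yields $U^{-1}\subseteq A$ and that the $\F_q$-span of $U^{-1}$ is a one-dimensional $\F_{q^e}$-subspace, which I would write as $\F_{q^e}\xi^{-1}$; then $U^{-1}\subseteq\F_{q^e}\xi^{-1}$ forces $U\subseteq\F_{q^e}\xi$, while $U^{-1}\subseteq A$ gives $\F_{q^e}\xi^{-1}\subseteq A$. Since $U\subseteq\F_{q^e}\xi$ with $\dim_{\F_q}U=2$ we get $e\ge 2$, and since $\F_{q^e}\xi^{-1}\subseteq A$ with $\dim_{\F_q}A=3$ we get $e\le 3$.

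Next I would eliminate the possibility $e=3$. In that case $\F_{q^3}\xi^{-1}\subseteq A$ and comparison of dimensions gives $A=\F_{q^3}\xi^{-1}$, so $A^{-1}\cup\{0\}=\F_{q^3}\xi$. Since $A^{-1}\not\subseteq B$, the subspace $B\cap\F_{q^3}\xi$ is proper in the three-dimensional $\F_{q^3}\xi$, hence has dimension at most two, so $k\le(q^2-1)/(q-1)=q+1$, contradicting $k>2q+1$. Thus $e=2$. Now $\F_{q^2}\xi$ is a two-dimensional $\F_q$-subspace containing the two-dimensional subspace $U$, so $U=\F_{q^2}\xi$; combining $U\subseteq B$ with $U^{-1}\subseteq A$ (equivalently $U\setminus\{0\}\subseteq A^{-1}$) gives $\F_{q^2}\xi=U\subseteq(A^{-1}\cap B)\cup\{0\}$, which is the required one-dimensional $\F_{q^2}$-subspace of $\barFq$.

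The only substantive input is the cap estimate $m_3(2,q)\le 2q+1$ for $q>3$; everything else is a specialization of the proof of Theorem~\ref{thm:cap} together with the dimension bookkeeping separating $e=2$ from $e=3$. The point that needs a little care, and which I regard as the crux of the argument, is that the two-dimensional $U$ produced by Theorem~\ref{thm:geometric} is \emph{forced} to coincide with $\F_{q^2}\xi$ once $e=2$: this is what guarantees that the $\F_{q^2}$-subspace we extract actually lies inside $B$, and it is exactly where the equality $\dim_{\F_q}U=\dim_{\F_q}\F_{q^2}\xi$ gets used. It is also worth noting that the hypothesis $q>3$ is genuinely needed, since for $q=2,3$ one has $m_3(2,q)>2q+1$ and the first step breaks down.
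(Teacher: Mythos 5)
Your proof is correct and follows essentially the same route as the paper: invoke the improved bound $m_3(2,q)\le 2q+1$ for $q>3$, rerun the argument of Theorem~\ref{thm:cap} via Theorem~\ref{thm:geometric}, and conclude $e=2$ so that $U=\F_{q^2}\xi$ lies in both $B$ and $A^{-1}\cup\{0\}$. The only difference is that you spell out the elimination of $e=3$ via the hypothesis $A^{-1}\not\subseteq B$, a step the paper leaves implicit in its assertion ``we necessarily have $e=2$''; your bookkeeping there is a welcome clarification but not a different argument.
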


\begin{proof}
The general bound for $m_r(t,q)$ recalled above reads $m_3(2,q)\le 2q+3$ in the case of present interest.
However, the latter can be improved to
$m_3(2,q)\le 2q+1$ for $q>3$,
see~\cite[Corollary~12.11 and Theorem~12.47]{Hirschfeld}.
Consequently, under our hypothesis
the conclusion of Theorem~\ref{thm:cap} does not hold,
and hence the argument in the proof of Theorem~\ref{thm:cap} applies.
We necessarily have $e=2$,
the one-dimensional $\F_{q^2}$-subspace $\F_{q^2}\xi$ of $\barFq$
found there coincides with $U$, and hence
it is not only contained in $A^{-1}\cup\{0\}$,
but in $B$ as well.
\end{proof}

The information contained in the conclusion of Corollary~\ref{cor:q^3}, together with a further appeal to Theorem~\ref{thm:geometric}
and to a classical result of B.~Segre, is sufficient to determine the geometric structure
of the set $A^{-1}\cap B$ for three-dimensional $\F_q$-subspaces which attain equality in Csajb\'{o}k's bound, as follows.

\begin{theorem}\label{thm:Segre}
Suppose $q$ odd and $q>3$.
Let $A$ and $B$ be $\F_q$-subspaces of $\barFq$ of size $q^3$, with $A^{-1}\not\subseteq B$,
such that
$|A^{-1}\cap B|/(q-1)=2q+2$.
Then the image of $A^{-1}\cap B$ in $\Proj B$ is the union of a line
and a conic.
\end{theorem}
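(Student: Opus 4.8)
The plan is to study the point set $S\subseteq\Proj B\cong PG(2,q)$ which is the image of $A^{-1}\cap B$; by hypothesis $|S|=2q+2$. First I would invoke Corollary~\ref{cor:q^3}, which applies since $2q+2>2q+1$, to produce a one-dimensional $\F_{q^2}$-subspace $\F_{q^2}\xi\subseteq(A^{-1}\cap B)\cup\{0\}$. Its image in $\Proj B$ is then a line $\ell$ all of whose $q+1$ points lie in $S$, and taking inverses gives $\F_{q^2}\xi^{-1}\subseteq A$. Thus $S=\ell\cup\mathcal C$ with $\mathcal C:=S\setminus\ell$ a set of exactly $q+1$ points, and it remains to recognise $\mathcal C$ as a conic.

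The core step is to show that $\ell$ is the \emph{only} line of $PG(2,q)$ meeting $S$ in at least four points. Given such a line, with two-dimensional preimage $U\subseteq B$, the estimate $|A^{-1}\cap U|/(q-1)\ge 4>d=3$ lets me apply Theorem~\ref{thm:geometric}: $U^{-1}\subseteq A$ and the $\F_q$-span $W$ of $U^{-1}$ is a one-dimensional $\F_{q^e}$-subspace. Since $W$ contains the $q^2-1$ elements of $U^{-1}$ and is contained in the three-dimensional $A$, one has $e\in\{2,3\}$. The case $e=3$ forces $A=W=\F_{q^3}c$ for some $c$, so $A^{-1}\cup\{0\}$ is an $\F_q$-subspace; then $A^{-1}\not\subseteq B$ bounds its intersection with $B$ by a two-dimensional $\F_q$-subspace, giving $|S|\le(q^2-1)/(q-1)=q+1$, a contradiction. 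Hence $e=2$, so $W$ is a one-dimensional $\F_{q^2}$-subspace contained in $A$. Here is the key observation: a three-dimensional $\F_q$-subspace contains \emph{at most one} one-dimensional $\F_{q^2}$-subspace — two distinct such would meet only in $0$, yet, each being two-dimensional over $\F_q$, would intersect in $\F_q$-dimension at least $2+2-3=1$. Therefore $W=\F_{q^2}\xi^{-1}$, and comparing cardinalities yields $U^{-1}=\F_{q^2}\xi^{-1}\setminus\{0\}$, i.e.\ $U=\F_{q^2}\xi$, so the line in question is $\ell$.

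Finally, every line other than $\ell$ meets $S$ in at most three points, hence meets $\mathcal C=S\setminus\ell$ in at most two points: a line $m\neq\ell$ contains exactly one point of $\ell\subseteq S$, and that point lies outside $\mathcal C$, so $|\mathcal C\cap m|=|S\cap m|-1\le 2$ (and $|\mathcal C\cap\ell|=0$). Thus $\mathcal C$ is a $(q+1)$-arc in $PG(2,q)$, and since $q$ is odd, Segre's theorem identifies $\mathcal C$ as an (irreducible) conic. Consequently the image of $A^{-1}\cap B$ in $\Proj B$ is $S=\ell\cup\mathcal C$, the union of a line and a conic (disjoint, in view of the count $|S|=(q+1)+(q+1)$, so $\ell$ is in fact external to the conic). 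The main obstacle is the uniqueness step of the middle paragraph: everything hinges on excluding a second heavy line, and that rests on combining Theorem~\ref{thm:geometric} with the elementary fact that the three-dimensional space $A$ cannot accommodate two distinct $\F_{q^2}$-lines; the degenerate possibility $e=3$, though quickly dispatched, must also be handled.
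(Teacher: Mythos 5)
Your proof is correct and follows essentially the same route as the paper's: Corollary~\ref{cor:q^3} supplies the $\F_{q^2}$-line, Theorem~\ref{thm:geometric} combined with the fact that a three-dimensional $\F_q$-space cannot contain two distinct one-dimensional $\F_{q^2}$-subspaces shows the residual $q+1$ points form an arc, and Segre's theorem finishes. The only (welcome) difference is that you spell out explicitly the exclusion of the case $e=3$ and the uniqueness of the heavy line, steps the paper compresses into the phrase ``this being clearly not the case.''
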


\begin{proof}
According to Corollary~\ref{cor:q^3}, the set $(A^{-1}\cap B)\cup\{0\}$
contains a one-dimensional $\F_{q^2}$-subspace $\F_{q^2}\xi$ of $\barFq$.
After replacing the subspaces $A$ and $B$ with $\xi A$ and $\xi^{-1}B$,
which changes neither the hypotheses nor the conclusion, we may assume that
$(A^{-1}\cap B)\cup\{0\}$ contains the subfield $\F_{q^2}$ of $\barFq$.
Thus, both $A$ and $B$ contain $\F_{q^2}$.
The image of $\F_{q^2}$ in the two-dimensional projective space $\Proj(B)$
is the required line.

Now set $C:=(A^{-1}\cap B)\setminus\F_{q^2}$.
We claim that any two-dimensional $\F_q$-subspace $U$ of $B$ meets $C$ in at most $2(q-1)$ elements.
Assuming $U\neq\F_{q^2}$ as we obviously may, we have $|U\cap\F_{q^2}|=q$.
By way of contradiction, suppose that $|C\cap U|>2(q-1)$.
Then $|A^{-1}\cap U|>3(q-1)$, and hence
$U^{-1}$ spans a one-dimensional $\F_{q^2}$-subspace of $\barFq$
according to Theorem~\ref{thm:geometric}.
This being clearly not the case, we have to concede that $|C\cap U|\le 2(q-1)$.
Thus, the image of $C$ in the two-dimensional projective space $\Proj B$
is an arc with $q+1$ points.
According to a celebrated result of B.~Segre~\cite[Theorem~8.14]{Hirschfeld},
when $q$ is odd any such arc is a conic.
\end{proof}

The very special case of Theorem~\ref{thm:Segre} where $A=B\subseteq\F_{q^4}$
was proved by Csajb\'{o}k~\cite[Theorem~4.8, Assertion~(3)]{Csajbok:inverse-closed}.
A much more general result than Theorem~\ref{thm:Segre} was proved by the author
by different methods in~\cite[Theorem~9]{Mat:inversion},
which gives a classification, and with it a precise count in a suitable sense, of all pairs of three-dimensional $\F_q$-subspaces $A$, $B$ of $\barFq$ such that
$|A^{-1}\cap B|/(q-1)=\{2q,2q+1,2q+2\}$,
with no restriction on the parity of $q$
(with $q>5$ for the two smaller values).
It turns out that in all those cases the image of $A^{-1}\cap B$ in $\Proj B$
is the union of a nonsingular conic and a secant, tangent or external line in the three cases.
The intermediate case occurs only for even $q$, and the other two cases only for odd $q$.

\bibliography{References}

\end{document}